\documentclass[12pt,reqno]{amsart}

\setlength{\textheight}{23cm}
\setlength{\textwidth}{16cm}
\setlength{\topmargin}{-0.8cm}
\setlength{\parskip}{0.3\baselineskip}
\hoffset=-1.4cm

\usepackage{amsmath}
\usepackage{amssymb}
\usepackage{amsthm}

\newtheorem{theorem}{Theorem}[section]

\newtheorem{lemma}[theorem]{Lemma}

\theoremstyle{definition}

\newtheorem{remark}[theorem]{Remark}

\newcommand{\C}{\mathbb{C}}
\newcommand{\R}{\mathbb{R}}
\newcommand{\M}{\mathcal{M}}
\newcommand{\X}{{X_{\C}}}
\newcommand{\Sing}{\mathrm{Sing}}
\newcommand{\cL}{\mathcal{L}}

\numberwithin{equation}{section}

\begin{document}
\baselineskip=15.5pt

\title[Moduli spaces of principal bundles over $\mathbb R$]{A Torelli theorem
for moduli spaces of principal bundles on curves defined over $\mathbb R$}

\author[I. Biswas]{Indranil Biswas}

\address{School of Mathematics, Tata Institute of Fundamental
Research, Homi Bhabha Road, Mumbai 400005, India}

\email{indranil@math.tifr.res.in}

\author[O. Serman]{Olivier Serman}

\address{Univ. Lille, CNRS, UMR 8524 - Laboratoire Paul Painlev\'e, F-59000 Lille, France}

\email{Olivier.Serman@math.univ-lille1.fr}

\subjclass[2010]{14D20, 14P99, 14C34}

\keywords{Curve over $\mathbb R$, principal bundle, moduli space, semistability,
Torelli theorem}

\date{}

\begin{abstract}
Let $X$ be a geometrically irreducible smooth projective curve, of genus
at least three, defined over the field of real numbers. Let $G$ be a connected reductive
affine algebraic group, defined over $\mathbb R$, such that $G$ is
nonabelian and has one simple factor. We prove that the isomorphism
class of the moduli space of principal $G$--bundles on $X$ determine uniquely
the isomorphism class of $X$.
\end{abstract}

\maketitle

\section{Introduction}

Let $X$ be a geometrically irreducible smooth projective curve defined over
the field of real numbers, of genus $g$, with $g\,\geq\, 3$. Let $\cL\,\in\,
\text{Pic}^d(X)$
be a point defined over $\mathbb R$. We note that $\cL$ need not correspond to a
line bundle over $X$. For example, the unique $\mathbb R$--point of $\text{Pic}^1$
of the anisotropic conic does not correspond to a line bundle over the anisotropic
conic. Let
${\mathcal N}_X(r,\cL)$ denote the moduli space of semistable vector bundles on $X$
of rank $r$ and determinant $\cL$, where $r\, \geq\, 2$.

We prove that the isomorphism class of the variety ${\mathcal N}_X(r,\cL)$
uniquely determines the isomorphism class of the real curve $X$ (Theorem \ref{thm1}).

When the base field is complex numbers, this was proved in \cite{MN}
for rank two, and in \cite{Tj}, \cite[{p. 229, Theorem E}]{KP}
for general $r$ and $d$.

Let ${\X}$ be the complexification of $X$.
Let $G_\C$ be a connected reductive affine algebraic group defined over $\mathbb C$,
and let $G$ be a real form of $G_\C$. We assume that $G_\C$ is nonabelian and it has
exactly one simple factor. The antiholomorphic involution of $G_\C$
corresponding to $G$ will be denoted by $\sigma_G$.
Let $\M_{\X}(G_\C)$ denote the moduli space of topologically trivial
semistable principal $G_\C$--bundles on $\X$. The variety $\M_{\X}(G_\C)$ 
is the complexification of the component of the
moduli space of principal $G$--bundles on $X$ that contains the trivial $G$--bundle.
The involution $\sigma_G$ and the antiholomorphic
involution of $\X$ together produce the antiholomorphic involution
$\sigma_{\mathcal M}$ of $\M_{\X}(G_\C)$.

We prove that the isomorphism class of the real variety $(\M_{\X}(G_\C)\, ,
\sigma_{\mathcal M})$ uniquely determines the isomorphism class of $X$ (Theorem
\ref{thm2}).

The proof of Theorem \ref{thm2} crucially uses a result of \cite{BH} which says that the
isomorphism class of $\M_{\X}(G_\C)$ uniquely determines the isomorphism
class of $\X$.

\section{Moduli spaces of vector bundles}

Let $X$ be a geometrically irreducible smooth projective curve defined over
$\mathbb R$. Let $g$ denote the genus of $X$. We will assume that $g\,\geq\, 3$.
For any $d\,\in\, \mathbb Z$ and any integer $r\, \geq\, 2$, let ${\M}_X(r,d)$ be the
moduli space of semistable vector bundles on $X$ of rank $r$ and degree $d$;
see \cite{BHu}, \cite{BGH}, \cite{BHH}, \cite{Sc1}, \cite{Sc2}, \cite{Sc3} for
moduli spaces of bundles over $X$. Let
$$
\det\, :\, {\M}_X(r,d)\,\longrightarrow\, \text{Pic}^d(X)
$$
be the morphism defined by $E\, \longmapsto\, \bigwedge^r E$. Take any
$\mathbb R$--point $\cL\,\in\, \text{Pic}^d(X)$. Define
$$
{\mathcal N}_X(r,\cL)\, :=\, {\det}^{-1}(\cL)\,\subset\, {\M}_X(r,d)\, .
$$
This ${\mathcal N}_X(r,\cL)$ is a geometrically irreducible normal projective variety
defined over $\mathbb R$, of dimension $(r^2-1)(g-1)$.

Let $\X\, :=\, X_{\C}\,=\, X\times_{\mathbb R}\C$ be the
complex projective curve obtained from $X$ by extending the base field to $\C$.
Let $\cL_\C \,\in\, \text{Pic}^d(\X)$ be the pull-back of $\cL$ to
$\X$ by the natural morphism
$\xi\, :\, \X\,\longrightarrow\, X$. The nontrivial element of the
Galois group $\text{Gal}(\xi)\,=\,\text{Gal}({\mathbb C}/{\mathbb R})\,=\,
{\mathbb Z}/2\mathbb Z$ produces an antiholomorphic involution
$$
\sigma\, :\, \X \,\longrightarrow \,\X\, .
$$

The conjugate vector bundle of a holomorphic vector bundle $E$ on $\X$
will be denoted by $\overline{E}$. We recall that the underlying
real vector bundle for $\overline{E}$ is identified with that of $E$, while the
multiplication on $\overline{E}$ by any $c\, \in\, \C$ coincides with the
multiplication by $\overline c$ on $E$. The $C^\infty$ vector bundle
$\sigma^\ast\overline E$ has a natural holomorphic structure which is uniquely
determined by the condition that the natural $\mathbb R$--linear identification of
it with $E$ is anti-holomorphic. Note that we have a commutative diagram
$$
\begin{matrix}
E & \stackrel{\sim}{\longrightarrow} & \sigma^\ast\overline E\\
\Big\downarrow && \Big\downarrow\\
X & \stackrel{\sigma}{\longrightarrow} & X
\end{matrix}
$$
It is easy to see that $E$ is semistable (respectively, stable) if and only if 
$\sigma^\ast\overline E$ is semistable (respectively, stable). Similarly, $E$ is 
polystable if and only if $\sigma^\ast\overline E$ is polystable.

The above hypothesis that $\cL \,\in\, \text{Pic}^d(X)$ means that the line
bundle $\cL_\mathbb C$ is holomorphically isomorphic to
the line bundle $\sigma^\ast\overline {\cL_{\mathbb{C}}}$.

Let ${\mathcal N}_{\X}(r,\cL_\C)$ be the moduli space of semistable vector
bundles on $\X$ of rank $r$ and determinant $\cL_\C$. The complex variety
${\mathcal N}_{\X}(r,\cL_\C)$ coincides with the complexification
${{\mathcal N}_{X}(r,\cL) \times_\mathbb R \C}$ of $\mathcal N_{X}(r,\cL)$; the
resulting antiholomorphic involution
\begin{equation}\label{e1}
\sigma_{\mathcal N}\, :\, {\mathcal N}_{\X}(r,\cL_\C)\,\longrightarrow\,
{\mathcal N}_{\X}(r,\cL_\C)
\end{equation}
sends a vector bundle $E$ on $\X$ to the vector bundle $\sigma^\ast
\overline{E}$.

\begin{theorem}\label{thm1}
The isomorphism class of the $\mathbb R$--variety $\mathcal {N}_{X}(r,\cL)$
uniquely determines the isomorphism class of the real curve $X$.
\end{theorem}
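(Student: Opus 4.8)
The plan is to reduce Theorem \ref{thm1} to the complex Torelli theorem via complexification, and then to recover the real structure of $X$ by tracking the antiholomorphic involution throughout. Recall that to give a geometrically irreducible smooth projective curve over $\mathbb{R}$ is the same as to give a pair $(\X,\sigma)$ consisting of a complex curve and an antiholomorphic involution, and that two such curves are isomorphic over $\mathbb{R}$ exactly when their complexifications admit an isomorphism intertwining the two involutions. Since ${\mathcal N}_X(r,\cL)\times_{\mathbb R}\C\,=\,{\mathcal N}_{\X}(r,\cL_\C)$ with descent datum $\sigma_{\mathcal N}$ as in \eqref{e1}, Theorem \ref{thm1} is equivalent to the statement that the pair $({\mathcal N}_{\X}(r,\cL_\C),\,\sigma_{\mathcal N})$ determines $(\X,\sigma)$ up to isomorphism; concretely, if ${\mathcal N}_X(r,\cL)$ and ${\mathcal N}_{X'}(r',\cL')$ are isomorphic as $\mathbb{R}$--varieties, one must produce an isomorphism $\X\to\X'$ of complex curves carrying $\sigma$ to $\sigma'$.

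The key ingredient is a functorial form of the complex Torelli theorem. By \cite{MN}, \cite{Tj}, \cite{KP} the complex variety ${\mathcal N}_{\X}(r,\cL_\C)$ determines $\X$, and, crucially, the reconstruction is canonical: through a general point $[E]$ of ${\mathcal N}_{\X}(r,\cL_\C)$, the rational curves of minimal degree are precisely the Hecke curves, obtained by performing elementary modifications of $E$ at a single variable point of $\X$; their total space is a $\mathbb{P}^{\,r-1}$--bundle over $\X$, and — because $g\,\geq\,3$, so that a projective bundle over $\X$ has a unique bundle structure — contracting its fibres exhibits $\X$ as an intrinsic invariant of ${\mathcal N}_{\X}(r,\cL_\C)$. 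Being intrinsic, this construction is preserved by every self-isomorphism of ${\mathcal N}_{\X}(r,\cL_\C)$, holomorphic or antiholomorphic, and induces a self-map of $\X$ of the same type; moreover, by \cite{KP}, the resulting homomorphism $\mathrm{Aut}({\mathcal N}_{\X}(r,\cL_\C))\to\mathrm{Aut}(\X)$ has kernel the ``standard'' automorphisms — tensoring by a line bundle in $\mathrm{Pic}^0(\X)[r]$, and, when $r$ divides $2d$, the dualization $E\mapsto E^\vee$ twisted by a fixed line bundle.

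Applying this to $\sigma_{\mathcal N}$: since it is an antiholomorphic involution of ${\mathcal N}_{\X}(r,\cL_\C)$, functoriality of the reconstruction produces an antiholomorphic map $\tau\colon\X\to\X$ with $\tau^2=\mathrm{id}$; it then suffices to identify $\tau$ with $\sigma$ up to an automorphism of $\X$ (harmless for the isomorphism class of the real curve). This follows from the formula $\sigma_{\mathcal N}(E)=\sigma^\ast\overline E$ of \eqref{e1}: the operation $E\mapsto\sigma^\ast\overline E$ sends an elementary modification of $E$ supported at $x$ to one supported at $\sigma(x)$, so $\sigma_{\mathcal N}$ carries the Hecke curve attached to $x$ to the Hecke curve attached to $\sigma(x)$, hence induces $\sigma$ on the base curve. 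Therefore $(\X,\tau)\,\cong\,(\X,\sigma)$, i.e. $X$, is recovered, which yields Theorem \ref{thm1}. The comparison statement is argued in the same way, with $\sigma_{\mathcal N}$ replaced by the complexification of a given isomorphism ${\mathcal N}_X(r,\cL)\,\cong\,{\mathcal N}_{X'}(r',\cL')$, after first recovering $r$ and $g$ from the dimension $(r^2-1)(g-1)$ and the local geometry so as to compare moduli spaces of the same numerical type.

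I expect the main obstacle to lie in passing from equivariance of $\sigma_{\mathcal N}$ (or of the complexified isomorphism) to equivariance of an honest isomorphism of curves: one must check that the discrepancy between the curve map first extracted and the real structure $\sigma$ — a priori an element of $\mathrm{Pic}^0(\X)[r]\rtimes\mathrm{Aut}(\X)$, with an extra $\mathbb{Z}/2$ when $r\mid 2d$ — can be absorbed by adjusting choices, a small cocycle--triviality verification; in the comparison form the standard-automorphism ambiguities on the two sides must be matched similarly. A secondary, more technical point is justifying, under $g\,\geq\,3$, that the minimal rational curves through a general point of ${\mathcal N}_{\X}(r,\cL_\C)$ are exactly the Hecke curves, so that the reconstruction of $\X$ is genuinely canonical; the low-rank cases and the case $r\mid 2d$ would be treated separately but raise no essential difficulty.
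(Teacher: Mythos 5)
Your strategy is sound and reaches the same conclusion, but by a genuinely different route from the paper's. The paper does not use minimal rational curves at all: it invokes Kouvidakis--Pantev \cite{KP} twice, once (Theorem E) for the complex Torelli statement and once (Theorem B) for the fact that every automorphism of ${\mathcal N}_{\X}(r,\cL_\C)$ has the form $E\mapsto H^\ast E\otimes\nu$ or $E\mapsto H^\ast E^\vee\otimes\nu_1$; this gives a homomorphism $\Psi\colon \mathrm{Aut}({\mathcal N}_{\X}(r,\cL_\C))\to\mathrm{Aut}(\X)$, and writing the isomorphism of real structures as $\tau_{\mathcal N}=f^{-1}\sigma_{\mathcal N}f$, a short computation shows $\Psi(\tau_{\mathcal N}^{-1}\circ f^{-1}\circ\sigma_{\mathcal N}\circ f)=\varphi\circ\sigma\circ\varphi^{-1}\circ\tau$ with $\varphi=\Psi(f)$, forcing $\tau=\varphi\circ\sigma\circ\varphi^{-1}$, i.e.\ $(\X,\sigma)\cong(\X,\tau)$. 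You instead make the reconstruction of $\X$ canonical (minimal rational curves through a general point are Hecke curves, their parameter space is a projective bundle over $\X$, and the base is intrinsic because $\X$ carries no rational curves) and then use equivariance: an antiholomorphic automorphism preserves minimal rational curves, and $E\mapsto\sigma^\ast\overline E$ takes Hecke modifications at $x$ to Hecke modifications at $\sigma(x)$, so $\sigma_{\mathcal N}$ induces exactly $\sigma$ on the reconstructed curve. That argument is correct, and it actually dissolves the ``cocycle/standard automorphism'' worry you raise at the end, since no ambiguity by twists or dualization ever enters. Two caveats: the Hecke-curve characterization you lean on is not contained in \cite{MN}, \cite{Tj} or \cite{KP} --- it is a separate nontrivial theorem (Hwang, X.~Sun; available for $g\geq 3$) --- so your proof is complete only modulo that external input, whereas the paper needs nothing beyond the automorphism theorem it already cites; and the functoriality of the reconstruction under antiholomorphic maps (images of rational curves are rational, the ample generator of the Picard group is preserved, general points go to general points) should be stated and checked explicitly. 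What your route buys is a visibly natural reconstruction independent of the full automorphism classification; what the paper's route buys is brevity and reliance only on the literature it cites.
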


\begin{proof}
First note that the isomorphism class of the complex variety
$\mathcal{N}_{\X}(r,\cL_\C)$ uniquely determines the complex curve
$\X$ \cite{Tj}, \cite[p. 229, Theorem E]{KP}. We have to 
prove that the antiholomorphic involution $\sigma_{\mathcal N}$ determines $\sigma$. 

Let $\tau$ be an antiholomorphic involution of $\X$ such that the involution 
$E\, \longmapsto\, \tau^\ast\overline{E}$ preserves $\mathcal N_{\X} (r, \cL_\C)$.
The resulting antiholomorphic involution of ${\mathcal N}_{\X}(r,\cL_\C)$
will be denoted by $\tau_{\mathcal N}$. The two real varieties
$(\mathcal N_{\X} (r,\cL_\C)\, , \tau_{\mathcal N})$
and $(\mathcal N_{\X}(r,\cL_\C)\, ,
\sigma_{\mathcal N})$ are isomorphic if and only if there exists a complex algebraic
automorphism $f$ of ${\mathcal N}_{\X}(r,\cL_\C)$ such that 
\begin{equation}\label{f1}
\tau_{\mathcal N} \,=\, f^{-1} \sigma_{\mathcal N} f\, .
\end{equation}
Assume that the two real varieties $(\mathcal N_{\X}
(r,\cL_\C)\, , \tau_{\mathcal N})$ and $(\mathcal N_{\X}(r,\cL_\C)\, ,
\sigma_{\mathcal N})$ are isomorphic. Fix an automorphism $f$ of
$\mathcal N_{\X}(r,\cL_\C)$ satisfying \eqref{f1}.

The dual a vector bundle $E$ will be denoted by $ E^\vee$; the dual of a
line bundle $\nu$ will also be denoted by $\nu^{-1}$.

Take any algebraic automorphism $h$ of ${\mathcal N}_{\X}
(r,L_\C)$. It follows from \cite[p. 228, Theorem B]{KP} and
\cite[p. 228, remark 0.1]{KP} that $h$ is either of the form
$E \,\longmapsto\, H^* E\otimes \nu$ or 
$E \,\longmapsto \, H^* E^\vee \otimes \nu_1$, where $H$ is an
automorphism of $\X$ uniquely determined by $h$
while $\nu$ a line bundle on $\X$
with $\nu^{\otimes r}\, =\, {\mathcal O}_X$ and $\nu_1$ a line bundle on $\X$ 
with $\nu^{\otimes r}_1\, =\, L^{\otimes 2}_\C$; it should be
clarified both $\nu$ and $\nu_1$ are independent of $E$. Therefore, we get a map
\begin{equation}\label{Psi}
\Psi\, :\, \text{Aut}({\mathcal N}_{\X}
(r,L_\C))\, \longrightarrow\, \text{Aut}(\X)\, ,\ \
h\, \longmapsto\, H^{-1}\, .
\end{equation}
It is straight-forward to check that $\Psi$ is a homomorphism of groups.

We will denote $\Psi(f)\, \in\, \text{Aut}(\X)$ by $\varphi$, where $\Psi$
is defined in \eqref{Psi} and $f$ is the automorphism in \eqref{f1}. First assume that
$$
f(V)\, =\, A\otimes \varphi^\ast V\, ,
$$
where $A$ is a line bundle on $\X$. Therefore, we have
$$
f^{-1}(V)\, =\, ((\varphi^{-1})^\ast A^{-1})\otimes (\varphi^{-1})^\ast V\, .
$$
Hence the automorphism $\tau^{-1}_{\mathcal N}\circ f^{-1}\circ
\sigma_{\mathcal N}\circ f$ of ${\mathcal N}_{\X}(r, L_\C)$ is the morphism defined by
$$
V\, \longmapsto\, \tau^\ast \overline{((\varphi^{-1})^{^\ast} A^{-1})\otimes
(\varphi^{-1})^{^\ast} ((\sigma^*{\overline A})\otimes (\sigma^*
\overline{\varphi^*V}))}
$$
$$
=\, B\otimes \tau^\ast(\varphi^{-1})^\ast\sigma^\ast \varphi^*V\,=\,B\otimes
(\varphi\circ\sigma\circ \varphi^{-1}\circ\tau)^\ast V\, ,
$$
where $B$ is a line bundle which does not depend on $V$. This implies that
\begin{equation}\label{j1}
\eta\,:=\, \Psi(\tau^{-1}_{\mathcal N}\circ f^{-1}\circ\sigma_{\mathcal N}\circ f)
\,=\, \varphi\circ\sigma\circ \varphi^{-1}\circ\tau\, .
\end{equation}
Now from \eqref{f1} we conclude that $\eta\,=\, \text{Id}_{\X}$. So from
\eqref{j1} we have
$$
\tau\,=\, \varphi\circ \sigma\circ\varphi^{-1}\, .
$$
Therefore, $\varphi$ produces an isomorphism between the two curves
$({\X}\, ,\sigma)$ and $({\X}\, ,\tau)$.

Next assume that
$$
f(V)\, =\, A\otimes \varphi^\ast V^\vee\, ,
$$
where $A$ is a line bundle on $\X$. Then
$$
f^{-1}(V)\, =\, ((\varphi^{-1})^\ast A)\otimes (\varphi^{-1})^\ast V^\vee\, .
$$
Therefore, the automorphism $\tau^{-1}_{\mathcal N}\circ f^{-1}\circ
\sigma_{\mathcal N}\circ f$ of ${\mathcal N}_{\X}(r, L_\C)$ is the morphism defined by
$$
V\, \longmapsto\, \tau^\ast \overline{((\varphi^{-1})^{^\ast} A)\otimes
((\varphi^{-1})^{^\ast} ((\sigma^*{\overline A})\otimes (\sigma^*
\overline{\varphi^*V^\vee}))^\vee)}
$$
$$
=\, B\otimes \tau^\ast(\varphi^{-1})^\ast\sigma^\ast \varphi^*V\,=\,B\otimes
(\varphi\circ\sigma\circ \varphi^{-1}\circ\tau)^\ast V\, ,
$$
where $B$ is a line bundle which does not depend on $V$. This implies that
$$
\Psi(\tau^{-1}_{\mathcal N}\circ f^{-1}\circ\sigma_{\mathcal N}\circ f)
\,=\, \varphi\circ\sigma\circ \varphi^{-1}\circ\tau\, .
$$
Hence, as before, $\tau\,=\, \varphi\circ \sigma\circ\varphi^{-1}$.
This completes the proof of the theorem.
\end{proof}

\section{Moduli spaces of principal bundles}

Let $G_\C$ be a connected nonabelian reductive group over $\C$ with only one 
simple factor and let 
$$
\sigma_G \,\colon\, G_\C \,\longrightarrow\, G_\C
$$
be an antiholomorphic automorphism of order two. We denote by $G$ the real form of
$G_\C$ corresponding to $\sigma_G$.

Let $\M_{\X}(G_\C)$ denote the moduli space of topologically trivial
semistable principal $G_\C$--bundles on $\X$. It is an irreducible
normal projective variety defined over $\mathbb C$. For any holomorphic
principal $G_\C$--bundle $E$ on $\X$, let
$$
{\overline E}\,=\, E(\sigma_G)\,=\,E \times^{\sigma_{_G}} G_\C\,\longrightarrow\,
\X
$$
be the $C^\infty$ principal $G_\C$--bundle
obtained by twisting the action of $G_\C$ using the homomorphism $\sigma_G$. So the
total space of $\overline E$ is identified with that of $E$, but the action of
any $y\, \in\, G_\C$ on $\overline E$ is the action of $\sigma_G(y)$ on $E$ in
terms of the identification
of $E$ with ${\overline E}$. The pullback $\sigma^\ast \overline E$ has a holomorphic
structure uniquely determined by the condition that the above identification
between the total
spaces of $E$ and $\sigma^\ast \overline E$ is anti-holomorphic; since the total spaces
of $\overline{E}$ and $\sigma^\ast \overline E$ are naturally identified, the above
identification between the total spaces of $E$ and $\overline E$ produces an
identification of the total spaces of $E$ and $\sigma^\ast \overline E$. 
The complex projective variety $\M_{\X}(G_\C)$ carries a real structure
associated to the antiholomorphic involution
$$\sigma_{\M} \,\colon\, \M_{\X}(G_\C)\,\longrightarrow\, \M_{\X}(G_\C)\, ,
\ \ E \,\longmapsto \,\sigma^\ast \overline E\, .$$

Let $\M_X(G)$ denote the variety over $\mathbb R$ defined by
the above pair $(\M_{G_\C}(\X)\, ,\sigma_\M)$.

A Zariski closed connected subgroup $P\, \subset\, G_\C$ is called a parabolic
subgroup if $G_C /P$ is a complete variety. A Levi subgroup of $P$ is a maximal
connected reductive
subgroup of $P$ containing a maximal torus. Any two Levi subgroups of $P$ are conjugate
by some element of $P$. A proper parabolic subgroup $P\,\subset\, G_\C$
is called maximal if there is no proper parabolic subgroup of $G_\C$ containing $P$.

\begin{lemma}\label{fixedLevi}
There exists a maximal parabolic subgroup $P\, \subset\, G_\C$ and a Levi subgroup
$L\, \subset\, P$, such that the two subgroups $\sigma_G(L)$ and $L$ are conjugate by
some element of $G_\C$.
\end{lemma}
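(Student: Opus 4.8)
The plan is to reduce the assertion to a combinatorial statement about the Dynkin diagram of $G_\C$. First I would fix a Borel subgroup $B \subset G_\C$ and a maximal torus $T \subset B$, and let $\Delta$ be the associated set of simple roots; since $G_\C$ has a single simple factor, the diagram carried by $\Delta$ is connected (and nonempty, as $G_\C$ is nonabelian). For $I \subseteq \Delta$ write $P_I \supseteq B$ for the standard parabolic and $L_I \subseteq P_I$ for its standard Levi subgroup; the $P_I$ are pairwise non-conjugate and exhaust the conjugacy classes of parabolic subgroups. Because $\sigma_G$ is a group automorphism it sends parabolic subgroups, Borel subgroups, maximal tori and Levi subgroups to subgroups of the same kind, hence induces a bijection $\theta$ of the power set of $\Delta$ by $[P_I] \mapsto [\sigma_G(P_I)]$. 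This $\theta$ preserves inclusions: if $P_I \subseteq P_J$ then $\sigma_G(P_I) \subseteq \sigma_G(P_J)$, and conjugating so that $\sigma_G(P_J)$ becomes standard one sees that $\sigma_G(P_I)$ becomes a standard parabolic contained in it, so $\theta(I) \subseteq \theta(J)$. Thus $\theta$ is induced by a permutation $\bar\theta$ of $\Delta$, and since $\dim \sigma_G(P_I) = \dim P_I$ the number of roots of the Levi factor of $P_I$ is preserved, which applied to the two-element subsets forces $\bar\theta$ to preserve edges and their multiplicities. Hence $\bar\theta$ is a symmetry of the Dynkin diagram, with $\bar\theta^2 = \mathrm{id}$ because $\sigma_G^2 = \mathrm{id}$.

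Next, for any $I \subseteq \Delta$ the group $\sigma_G(L_I)$ is a Levi factor of $\sigma_G(P_I)$, hence $G_\C$-conjugate to $L_{\bar\theta(I)}$. So it is enough to exhibit a node $\alpha \in \Delta$ such that $L_{\Delta \setminus \{\alpha\}}$ and $L_{\Delta \setminus \{\bar\theta(\alpha)\}}$ are $G_\C$-conjugate: one then takes for $P$ the maximal parabolic $P_{\Delta \setminus \{\alpha\}}$ and for $L$ its standard Levi $L_{\Delta \setminus \{\alpha\}}$, and $\sigma_G(L)$ is $G_\C$-conjugate to $L$ as wanted. I would use here that $L_I$ and $L_J$ are $G_\C$-conjugate as soon as the root subsystems $\Phi_I$ and $\Phi_J$ are conjugate under the Weyl group $W = N_{G_\C}(T)/T$ (lift the relevant element of $W$ to $N_{G_\C}(T)$ and conjugate).

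It then remains to apply the classification of symmetries of order at most two of a connected Dynkin diagram: every such symmetry fixes some node, unless the diagram is of type $A_{2n}$ and $\bar\theta$ is its nontrivial (flip) automorphism. In the first case I take $\alpha$ to be a fixed node, so that $\Delta \setminus \{\alpha\} = \Delta \setminus \{\bar\theta(\alpha)\}$ and there is nothing to prove. In the second case I take $\alpha$ arbitrary; the longest element $w_0$ of $W \cong \mathfrak{S}_{2n+1}$ satisfies $w_0(\Phi_J) = \Phi_{\bar\theta(J)}$ for every $J \subseteq \Delta$ (since $-w_0$ acts on $\Delta$ as $\bar\theta$), so in particular $\Phi_{\Delta \setminus \{\alpha\}}$ and $\Phi_{\Delta \setminus \{\bar\theta(\alpha)\}}$ are $W$-conjugate, and the previous paragraph gives the required conjugacy of Levi subgroups. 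This proves the lemma.

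I expect the delicate point to be the first paragraph, namely checking that the antiholomorphic (rather than holomorphic) involution $\sigma_G$ genuinely induces a symmetry of the Dynkin diagram. The antiholomorphy means $\sigma_G$ acts on the root datum only up to a twist by complex conjugation on $\mathbb{G}_m$, so one cannot simply invoke the classification of holomorphic automorphisms; the argument above circumvents this by working entirely with conjugacy classes of parabolic and Levi subgroups and with dimensions, all of which are insensitive to that twist. Alternatively, one may first replace $\sigma_G$ by $\mathrm{Inn}(g) \circ \sigma_G$ for a suitable $g \in G_\C$ so that it stabilizes $(B, T)$ and then argue with root subgroups; this replacement is harmless, since it only changes $\sigma_G(L)$ into a $G_\C$-conjugate subgroup and hence does not affect the conclusion.
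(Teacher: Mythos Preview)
Your proof is correct and follows essentially the same approach as the paper's: both reduce to showing that $\sigma_G$ induces an involution of the Dynkin diagram, observe that such an involution has a fixed vertex except in type $A_{2n}$, and handle the $A_{2n}$ case by hand. Your version is more careful on two points the paper leaves implicit---namely, why the \emph{antiholomorphic} involution still induces a diagram automorphism (you argue via conjugacy classes of parabolics and dimensions, which is exactly right), and why the $A_{2n}$ case works (you invoke $w_0$ explicitly, whereas the paper just declares it obvious for $\mathrm{SL}(n+1,\C)$).
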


\begin{proof}
For any parabolic subgroup $P\, \subset\, G_\C$, the image $\sigma_G(P)$ is also
a parabolic subgroup of $G_\C$. Since $\sigma_G(y^{-1}Py)\,=\, \sigma_G(y)^{-1}
\sigma_G(P)\sigma_G(y)$, we get a self-map of the conjugacy classes of parabolic
subgroups of $G_\C$ that sends the conjugacy class of any $P$ to the conjugacy
class of $\sigma_G(P)$. Therefore, the involution 
$\sigma_G$ also acts on the Dynkin diagram $D$ of $G_\C$ as an involution $\tau$.
Examining the Dynkin diagrams we observe that an involution of the Dynkin diagram
of $G_\C$ must have a fixed point unless $G_\C$ is of type $A_n$ for $n$ even.

If $G_\C$ is not of type $A_n$, let $P$ be a maximal parabolic subgroup corresponding to 
a vertex of $D$ fixed by the above constructed involution $\tau$. Then $P$ and 
$\sigma_G(P)$ are conjugate in $G_\C$. Let $y\, \in\, G_\C$ be such that
$\sigma_G(P)\,=\, y^{-1}Py$. Then for any Levi subgroup $L$ of $P$,
$$
y^{-1}Ly \,\subset\, y^{-1}Py \,=\, \sigma_G(P)
$$
is a Levi subgroup of $\sigma_G(P)$.

If $G_\C$ is of type $A_n$, then $\sigma_G(L)$ and $L$ are conjugate for every Levi 
subgroup of every maximal parabolic subgroup of $G_\C$. It is enough to check this 
for $G_\C\,=\,\mathrm{SL}(n+1,\C)$, in which case this is obvious.
\end{proof}

\begin{remark}
We can be more precise as follows. The two subgroups
$\sigma_G(L)$ and $L$ are conjugate for every Levi subgroup of 
every maximal parabolic subgroup of $G_\C$ unless $G_\C$ is of type $D_n$ (with $n\,\geq
\,4$) or $E_6$.
\end{remark}

\begin{lemma}\label{pi1}
Let $L$ be any Levi subgroup of a parabolic subgroup $P$ of $G_\C$, and let
$$L'\,=\,[L,\, L]$$
be its derived subgroup. Then the homomorphism
\begin{equation}\label{hi}
\pi_1(L') \,\longrightarrow\,\pi_1(G_\C)
\end{equation}
induced by the inclusion $L'\, \hookrightarrow\, G_\C$ is injective.
\end{lemma}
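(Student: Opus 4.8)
The plan is to reduce the statement to a well-known fact about Levi subgroups of reductive groups, namely that a Levi factor $L$ of a parabolic $P$ contains a maximal torus $T$ of $G_\C$ and that the root system of $L$ is a "closed" (in fact parabolic) subsystem of that of $G_\C$ — i.e. it is precisely the set of roots lying in the span of a subset of simple roots. I would first pass to the simply connected cover. Let $q\colon \widetilde{G}_\C \to G_\C$ be the universal cover of the derived group, extended so that $\pi_1(G_\C)$ sits in the exact sequence coming from the isogeny $\widetilde{G}_\C \times Z^0 \to G_\C$ where $Z^0$ is the connected centre; concretely $\pi_1(G_\C)$ is computed from the coweight/coroot lattices of a maximal torus $T \subset L \subset P$. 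The key point is that $\pi_1(G_\C) \cong P^\vee(\Phi)/Q^\vee(\Phi)$-type data only in the semisimple case, so I should work directly with the exact sequence $1 \to \pi_1(L') \to \pi_1(L) \to \pi_1(L/L') \to 1$ and the map $\pi_1(L) \to \pi_1(G_\C)$.

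The main steps, in order, would be: (1) Fix a maximal torus $T \subset L$; it is also a maximal torus of $G_\C$. Since $L'=[L,L]$ is semisimple, $\pi_1(L')$ is the quotient of the coweight lattice of $T \cap L'$ by the coroot lattice of $L$, and the coroots of $L$ form a subset $\Phi_L^\vee$ of the coroots $\Phi^\vee$ of $G_\C$ that is "saturated": if a $\mathbb Z$-combination of coroots of $G_\C$ lies in the $\mathbb Q$-span of $\Phi_L^\vee$ and is itself a coweight, it already lies appropriately — because $\Phi_L$ is obtained by intersecting $\Phi$ with a subspace spanned by part of a basis of simple roots. (2) Identify $\pi_1(G_\C)$ with $X_\ast(T)/Q^\vee$ where $Q^\vee$ is the coroot lattice of $G_\C$ (valid since $G_\C$ is connected reductive: $\pi_1$ is the quotient of the cocharacter lattice by the coroot lattice). (3) Then $\pi_1(L') \to \pi_1(G_\C)$ factors as $X_\ast(T\cap L')/Q^\vee_L \to X_\ast(T)/Q^\vee$, and injectivity amounts to $X_\ast(T\cap L') \cap Q^\vee = Q^\vee_L$ inside $X_\ast(T)$. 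This last equality is exactly the statement that $Q^\vee_L$ is a direct summand-type sublattice of $Q^\vee$ cut out by the span of the simple coroots of $L$, which holds because $L$ is a Levi: the simple coroots of $L$ extend to a basis of $Q^\vee$ adapted to the decomposition $\Phi = \Phi_L \sqcup (\text{rest})$.

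I expect the main obstacle to be step (3): making precise and proving the lattice equality $X_\ast(T\cap L') \cap Q^\vee(G_\C) = Q^\vee(L)$. One clean way to handle it is to invoke the structure of Levi subgroups explicitly — choose a Borel $B \subset P$ with $B \supset T$, let $\Delta$ be the corresponding simple roots, let $I \subset \Delta$ be the subset with $\Phi_L = \mathbb Z I \cap \Phi$; then the simple coroots $\{\alpha^\vee : \alpha \in I\}$ are part of the basis $\{\alpha^\vee : \alpha \in \Delta\}$ of $Q^\vee(G_\C)$, so $Q^\vee(L) = \bigoplus_{\alpha \in I}\mathbb Z\alpha^\vee$ is a saturated sublattice, and since $X_\ast(T\cap L')$ is contained in the $\mathbb Q$-span of $\{\alpha^\vee:\alpha\in I\}$, intersecting with $Q^\vee(G_\C) = \bigoplus_{\alpha\in\Delta}\mathbb Z\alpha^\vee$ returns exactly $\bigoplus_{\alpha\in I}\mathbb Z\alpha^\vee$. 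Chasing this through the isogeny identifications gives injectivity of \eqref{hi}. Alternatively, one can cite the known fact that for a Levi subgroup the natural map $\pi_1(L) \to \pi_1(G_\C)$ is an isomorphism, and deduce the claim for $L'$ from the commutative square relating $\pi_1(L')$, $\pi_1(L)$, $\pi_1(G_\C)$ and $\pi_1(G_\C/[G_\C,G_\C])$; but I would prefer the lattice argument as it is self-contained.
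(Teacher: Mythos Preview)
Your lattice argument is correct and takes a genuinely different route from the paper. The paper proceeds purely topologically: from the fibration $L' \to G_\C \to G_\C/L'$ it reduces injectivity of \eqref{hi} to showing $\pi_2(G_\C/L') = 0$, and then establishes this vanishing by chaining together two further fibrations, $P/L' \to G_\C/L' \to G_\C/P$ and $L/L' \to P/L' \to P/L$, using that $\pi_2$ of any Lie group vanishes, that $P/L$ (the unipotent radical) is contractible, and that $\pi_2(G_\C/P) \cong H_2(G_\C/P,\mathbb Z)$ is torsion-free while $\pi_2(G_\C/L')$ is finite. Your approach instead translates everything into cocharacter and coroot lattices via $\pi_1(H) \cong X_*(T_H)/Q^\vee(H)$ and exploits the fact that the simple coroots of $L$ form a subset of a basis of simple coroots of $G_\C$, so that $Q^\vee(L)$ is a direct summand of $Q^\vee(G_\C)$; the equality $X_*(T\cap L')\cap Q^\vee(G_\C)=Q^\vee(L)$ then drops out immediately. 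This is more algebraic and arguably more transparent once the root-datum description of $\pi_1$ is in hand; the paper's argument is more elementary in that it avoids root systems entirely, at the cost of juggling three long exact sequences.

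One caution about your closing ``alternative'': the map $\pi_1(L) \to \pi_1(G_\C)$ is in general only surjective, not an isomorphism --- already for $G_\C = \mathrm{SL}(2,\C)$ with $P$ a Borel and $L = T$ a maximal torus one has $\pi_1(L) \cong \mathbb Z$ mapping to $\pi_1(G_\C) = 0$. In lattice terms the kernel is $Q^\vee(G_\C)/Q^\vee(L)$. Since you rightly prefer the self-contained lattice argument anyway, this does not affect your main proof, but the side remark should be dropped or corrected.
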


\begin{proof}
Consider the fibration $L'\,\longrightarrow\, G_\C \,\longrightarrow\,
G_\C/L'$. Let
\begin{equation}\label{hi2}
\pi_2(G_\C)\,\longrightarrow\, \pi_2(G_\C/L')\,\longrightarrow\, \pi_1(L')
\,\longrightarrow\,\pi_1(G_\C)
\end{equation}
be the long exact sequence of homotopy groups associated to it. From \eqref{hi2}
we conclude that the homomorphism in \eqref{hi} is injective if
\begin{equation}\label{sh}
\pi_2(G_\C / L')\,=\, 0\, .
\end{equation}
Since $\pi_2(G_\C)\,=\, 0$ and $\pi_1(L')$ is a finite group (recall that $L'$ is
semisimple), from \eqref{hi2} it follows that $\pi_2(G_\C / L')$ is a finite group.

Now consider the fibration $P/L'\,\longrightarrow\, G_\C/L' \,\longrightarrow\, G_\C/P$.
Let
\begin{equation}\label{hi3}
\pi_2(P/L')\,\longrightarrow\, \pi_2(G_\C/L')\,\longrightarrow\, \pi_2(G_\C/P)
\,\longrightarrow\,\pi_1(G_\C)
\end{equation}
be the long exact sequence of homotopy groups 
associated to it.
Since $G_\C/P$ is simply connected, the second homotopy group $\pi_2(G_\C /P)$ is
isomorphic to $H_2(G_\C / P,\, \mathbb Z)$, which is a free abelian group. Therefore,
there is no nonzero homomorphism from the finite group $\pi_2(G_\C / L')$ to
$\pi_2(G_\C /P)$. Hence, the homomorphism
\begin{equation}\label{hi4}
\pi_2(P/L')\,\longrightarrow\, \pi_2(G_\C/L')
\end{equation}
in \eqref{hi3} is surjective.

Finally, consider the long exact sequence of homotopy groups
\begin{equation}\label{hi5}
\pi_2(L/L')\, \longrightarrow\, \pi_2(P/L')\, \longrightarrow\, \pi_2(P/L)
\end{equation}
associated to the fibration
$$
L/L' \,\longrightarrow\, P/L' \,\longrightarrow\, P/L\, .
$$
Since $P/L$ is diffeomorphic to the unipotent radical of $P$, which is contractible,
we have $\pi_2(P/L) \,=\, 0$. Also, $\pi_2(L/L') \,=\, 0$ because $L/L'$ is a Lie group.
Hence from \eqref{hi5} it follows that $\pi_2(P/L')\,=\, 0$. This implies that
\eqref{sh} holds because the homomorphism in \eqref{hi4} is surjective.
\end{proof}

\begin{theorem}\label{thm2}
The real variety $\M_X(G)$ uniquely determines the real curve $X$.
\end{theorem}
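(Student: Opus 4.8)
The plan is to carry out, over $\R$, the argument by which one recovers the complex curve $\X$ from $\M_{\X}(G_\C)$, keeping track throughout of the antiholomorphic involution. By \cite{BH} the complex variety $\M_{\X}(G_\C)$ determines $\X$, so, just as in the proof of Theorem \ref{thm1}, it suffices to prove that the involution $\sigma_{\M}$ of $\M_{\X}(G_\C)$ determines the involution $\sigma$ of $\X$ up to an automorphism of $\X$. If $G_\C$ is not semisimple, the fibre of the abelianization morphism $\M_{\X}(G_\C)\,\longrightarrow\,\M_{\X}(G_\C/[G_\C,G_\C])$ over a $\sigma_{\M}$--fixed point (for instance the trivial one) is the moduli space of the semisimple group $[G_\C,G_\C]$ equipped with an induced real structure having the same underlying curve; this reduces us to the case where $G_\C$ is semisimple, and there I would argue by induction on the rank, passing each time to the derived group of a Levi subgroup of a maximal parabolic in order to remove one vertex of the Dynkin diagram.

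For the inductive step, use Lemma \ref{fixedLevi} to choose a maximal parabolic $P\,\subset\, G_\C$ whose Levi $L$ satisfies $\sigma_G(L)\,=\,y^{-1}Ly$ for some $y\,\in\,G_\C$; inspecting the Dynkin diagrams one checks that the removed vertex may in addition be taken fixed by the diagram involution induced by $\sigma_G$ and such that $L'\,=\,[L,L]$ has a single simple factor, so that the Levi subgroups encountered upon iterating eventually reach type $A$ (or rank at most one). Since inner automorphisms of $G_\C$ act trivially on isomorphism classes of $G_\C$--bundles, replacing $\sigma_G$ by $\mathrm{conj}_{y}\circ\sigma_G$ changes neither $G$ nor $\sigma_{\M}$; after adjusting $y$ within the normalizer of $L$ we may assume that $\sigma_G$ preserves $L$, hence $L'$, restricting there to an antiholomorphic involution $\sigma_{L'}$ (still over the same real curve $X$, as $\sigma$ is untouched). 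The semistable $G_\C$--bundles admitting a reduction to the associate class of $P$ form a closed, $\sigma_{\M}$--stable subset of $\Sing(\M_{\X}(G_\C))$; passing to associated gradeds identifies a suitable irreducible component of its normalization with a moduli space built from semistable $L$--bundles, and Lemma \ref{pi1}, the injectivity of $\pi_1(L')\,\to\,\pi_1(G_\C)$, guarantees that the topologically trivial $G_\C$--bundles arising this way come exactly from topologically trivial $L'$--bundles, so that $\M_{\X}(L')$ appears canonically, carrying the real structure $E\,\longmapsto\,\sigma^{\ast}(E(\sigma_{L'}))$. In the base case, where $L'$ (or $G_\C$ itself) is of type $A_{r-1}$, this real variety is $\mathcal N_{\X}(r,\mathcal O_{\X})$ with the involution $E\,\longmapsto\,\sigma^{\ast}\overline{E}$ (split and quaternionic real forms) or $E\,\longmapsto\,\sigma^{\ast}\overline{E^{\vee}}$ up to a twist by a line bundle $\nu$ with $\nu^{\otimes r}\,=\,\mathcal O_{\X}$ (unitary real forms); in either case the argument of Theorem \ref{thm1}, possibly after passing to a finite covering for the adjoint or intermediate quotients, shows that this real variety determines $(\X,\sigma)\,=\,X$, which closes the induction.

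The principal obstacle is the geometric content underlying the second paragraph: one must show that the relevant closed subset of $\Sing(\M_{\X}(G_\C))$, its normalization, and the resulting moduli space of topologically trivial $L'$--bundles are recovered from the abstract real variety $\M_X(G)$ in an $\mathrm{Aut}(\X)$--equivariant and $\sigma_{\M}$--compatible fashion; over $\C$ this is the substance of \cite{BH}, which has to be revisited with the real structure present. There is also delicate bookkeeping: the stratum must be chosen so that $\sigma_{\M}$ maps it to itself rather than merely permuting it with another one, which is exactly why Lemma \ref{fixedLevi} is phrased through the conjugacy of $\sigma_G(L)$ and $L$ and why the exceptional Dynkin types $D_n$ and $E_6$ noted in the Remark after Lemma \ref{fixedLevi} are the constrained cases; the situations where $\sigma_{\M}$ interchanges a stratum with its opposite, such as $\mathrm{GL}_n$ with a unitary real form, must be handled on the side via abelianization fibres; and one must finally verify that the real form of $L'$ one reaches always falls under one of the two cases appearing in the proof of Theorem \ref{thm1}.
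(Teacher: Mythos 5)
Your overall strategy diverges from the paper's at the decisive point, and the step you yourself flag as ``the principal obstacle'' is in fact the entire content of the argument, so as it stands the proposal has a genuine gap. The paper does not induct down through Levi subgroups to type $A$ and it never invokes the Kouvidakis--Pantev-based argument of Theorem \ref{thm1} for the principal-bundle case. Instead, after reducing to $G_\C$ semisimple (via the pluri-anti-canonical system, which is defined over $\R$ -- not via the abelianization morphism, which for semisimple $G_\C$ is trivial and in any case would itself have to be recovered intrinsically), it identifies the strictly semistable locus $\Delta_G$ inside $\Sing(\M_\X(G_\C))$ by a local topological criterion, uses Lemma \ref{fixedLevi} to find a $\sigma_\M$--fixed component $M_L$, and then exploits the structure of the normalization $\widetilde M_L\simeq \M_\X(L)/\Gamma_L$ in a direction you do not consider: the morphism $\theta$ to $\M_\X(L/Z^0_L)/\Gamma_L$ (recovered from the canonical bundle of the smooth locus) has fibres over real smooth points isomorphic to $J_\X$ or to the Kummer quotient $J_\X/(\mathbb{Z}/2\mathbb{Z})$, from which one extracts the Jacobian with its real structure and the class of the canonical polarization, and concludes by the real Torelli theorem of Gross and Harris \cite{GH}. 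No iteration is needed, and the group-theoretic bookkeeping you struggle with (choosing a fixed vertex whose removal keeps the diagram connected, replacing $\sigma_G$ by $\mathrm{conj}_y\circ\sigma_G$ so that it preserves $L$ while remaining an involution) never arises.

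Concretely, the gaps in your route are these. First, you do not show how the stratum, its normalization, and a moduli space for a smaller group \emph{with its induced real structure} are recovered from the abstract real variety $\M_X(G)$; you defer this to ``revisiting \cite{BH} with the real structure present,'' but that is precisely what a proof must supply (the paper supplies it by observing that the topological characterization of $\Delta_G$, the normalization, and the map $\theta$ are all compatible with, or defined over, $\R$). Second, what the stratum actually yields is $\M_\X(L)/\Gamma_L$, not $\M_\X([L,L])$: the central torus $Z^0_L$ contributes Jacobian directions, so extracting a moduli space for the derived group (and its real structure) requires a further construction you do not give -- and it is exactly this Jacobian direction that the paper uses instead. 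Third, your induction may exit the class of groups covered by the hypotheses: $[L,L]$ can have several simple factors, and even when you can choose a fixed leaf of the Dynkin diagram, you must verify that the induced antiholomorphic involution on the smaller moduli space is again of the form treated by your inductive hypothesis; the modification of $\sigma_G$ by an inner automorphism need not be an involution without an argument. Finally, your base case uses Theorem \ref{thm1} for involutions of the form $E\mapsto \nu_1\otimes\sigma^\ast\overline{E^{\vee}}$ and for quotients of $\mathrm{SL}(r,\C)$, which goes beyond what Theorem \ref{thm1} states and is only sketched. A correct completion along the paper's lines would replace the whole induction by the Jacobian-plus-real-Torelli step.
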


\begin{proof}
We already know that the complex variety $\M_{\X}(G_\C)\,=\,\M_X(G) \times_\R \C$
determines the complex curve $\X$ \cite{BH}.

Let $\Sing (\M_\X(G_\C))$ denote the singular locus of the variety $\M_\X(G_\C)$.
Recall from \cite{BH} that the {\it strictly semi--stable locus} $$\Delta_G\,\subset\,
\M_\X(G_\C)$$ is the Zariski closure of the set of closed points $[E] \,\in\,
\Sing (\M_\X(G_\C))$ with the property that every Euclidean neighborhood $U$ of $[E]$
contains an open neighborhood $U' \,\ni\, [E]$ such that
$U' \setminus (U' \bigcap \Sing(\M_\X(G_\C)))$ is connected and simply connected.
Moreover, this closed subset $\Delta_G$ is the union of irreducible components
corresponding to the conjugacy classes of Levi subgroups of maximal
parabolic subgroups of $G_\C$. More precisely, the decomposition of $\Delta_G$
into irreducible components is the union
$$\Delta_G\,=\,\bigcup_L M_L$$
where $L$ ranges over conjugacy classes of Levi subgroups of maximal parabolic
subgroups of $G_\C$, and $M_L$ is the image of the morphism
$\M_\X (L)\,\longrightarrow\, \M_\X(G_\C)$ given by the inclusion of
$L$ in $G_\C$. This can be deduced from
\cite[Proposition 3.1]{BH} and Lemma \ref{pi1}. Indeed, every closed point in
$\Delta_G$ is defined by a principal $G_\C$--bundle $E$ admitting a reduction of
structure group $E_L$ to a Levi subgroup $L$ of a maximal parabolic subgroup of $G_\C$.
Moreover, this $L$--bundle $E_L$ is semistable and its topological type
$\delta \,\in\, \pi_1(L)$ is torsion, which means that $\delta$ belongs to
$\pi_1([L,L])$; this is because $\pi_1(L/[L,L])$ is free abelian. Now, since $E$ is
topologically trivial, $\delta$ must be trivial (follows from
Lemma \ref{pi1}), i.e., $[E] \,\in\, \M_{\X}(G_\C)$ belongs to $M_L$.

Moreover, $M_L$ is never empty, since there always exist semi-stable principal 
$L$--bundles which are topologically trivial, for example the trivial holomorphic
principal $L$--bundle. The fact that the subvarieties $M_{L_1}$ and $M_{L_2}$
of $\M_\X(G_\C)$
are distinct when $L_1$ and $L_2$ are not conjugate by some element of
$G_\C$ is contained in the last part of the proof of \cite[Proposition 3.1]{BH}.

The antiholomorphic involution $\sigma_\M$ maps the strictly semi--stable
locus $\Delta_G$ into itself, permuting its 
irreducible components. It follows from Lemma \ref{fixedLevi} that there exists at 
least one component $M_L$ which is fixed by $\sigma_\M$. The restriction of 
$\sigma_\M$ to this component $M_L$ has at least one fixed point, namely the closed point 
corresponding to the trivial bundle.

We now proceed as in the proof of \cite[Theorem 4.1]{BH} to recover the involution 
$\sigma$ defining the real curve $X$.

First, one can assume $G_\C$ to be semi--simple. To see this, let
$Z^0_{G_\C}$ be the connected component of the center of $G_\C$
containing the identity element. Let us denote by $G'$ the
quotient $G_\C/Z^0_{G_\C}$ of $G_\C$. Note that $\sigma_G$ preserves
$Z^0_{G_\C}$, so it produces a real structure on the quotient $G'$.
The canonical line bundle of
$\M_\X(G_\C)$ (respectively, $\M_\X(G')$) will be denoted by $\omega_{\M_\X(G_\C)}$
(respectively, $\omega_{\M_\X(G')}$). We note that $\omega_{\M_\X(G')}$ pulls back to
$\omega_{\M_\X(G_\C)}$ under the morphism $\M_\X(G_\C)\,\longrightarrow\,
\M_\X(G')$ given by the quotient map $G_\C\,\longrightarrow\, G'$.
There exists an integer $m$ such that the pluri--anti--canonical system
$|-m \omega_{\M_\X(G_\C)}|$ factors into the natural map $\M_\X(G_\C)\,
\longrightarrow\, \M_\X(G')$ followed by the embedding
$$
\M_\X(G') \,\hookrightarrow\, \vert -m \omega_{\M_\X(G_\C)}\vert^\ast
\,=\,\vert -m \omega_{\M_\X(G')}\vert^\ast\, .
$$
Since the dualizing sheaves are defined
over the reals, we have real structures on $|-m \omega_{\M_\X(G_\C)}|^\ast$
and $\vert -m \omega_{\M_\X(G')}\vert^\ast$. All the maps above are defined over
$\mathbb R$. Therefore, it is enough to prove the theorem for $G'$.

So let us assume that $G_\C$ is semi--simple. We have seen above that $\Delta_G 
\,\subset\,\M_\X(G_\C)$ contains at least one irreducible component fixed by $\sigma_\M$, 
which is equal to the variety $M_L$ associated to a Levi subgroup $L$ of a maximal 
parabolic subgroup of $G_\C$. Let $$\alpha \,\colon\, \widetilde{M}_L
\,\longrightarrow\, M_L$$ be the normalization
of $M_L$, and let $\sigma_L$ be the antiholomorphic involution of $M_L$. Since
normalization commutes with the base change of field of definition, the
variety $\widetilde{M}_L$ is also defined over $\mathbb R$, and the morphism $\alpha$
is also defined over $\mathbb R$. Hence the
antiholomorphic involution $\sigma_L$ of $M_L$ lifts to $\widetilde{M}_L$. Moreover, 
$\widetilde{M}_L$ is isomorphic to the quotient $\M_\X(L)/\Gamma_L$, where $\Gamma_L$ 
is the image of $N_{G_\C}(L)$ in $\mathrm{Out}(L)$, which is either trivial or
${\mathbb Z}/2{\mathbb Z}$ (see \cite{BH}), and this quotient map is compatible with
the real structures on $\M_\X(L)$ and $\widetilde{M}_L$.

Let $Z^0_L$ be the connected component of the center of $L$
containing the identity element. Let us denote by $L'$ the quotient $L/Z_L^0$. Then
the above group $\Gamma_L$ also acts on $\M_\X(L')$, and the morphism (defined over the 
real numbers)
\begin{equation}\label{mapLevi}
\theta\,\colon\, \widetilde{M}_L\,\simeq\,\M_\X(L)/\Gamma_L\,\longrightarrow\,
\M_\X(L')/\Gamma_L
\end{equation}
can be recovered from the second tensor power of the canonical line bundle on the smooth 
locus of $\widetilde{M}_L$. Indeed, this second tensor power extends to a line bundle on 
the whole variety, and a sufficiently negative power of it gives the morphism $\theta$ 
(see \cite{BH}).

Let $\beta\, :\, \M_\X(L')\, \longrightarrow\, \M_\X(L')/\Gamma_L$ be the quotient
map. Consider $\theta$ in \eqref{mapLevi}. For any point $y\, \in\, \M_\X(L')/\Gamma_L$,
the fiber $\theta^{-1}(y)$ is $J_\X$ (respectively, $J_\X/({\mathbb Z}/2{\mathbb Z})
$) if $\# \beta^{-1}(y)\,=\, 2$ (respectively, $\# \beta^{-1}(y)\,=\, 1$).

Now take any smooth point $$y\, \in\, \M_\X(L')/\Gamma_L$$ fixed by the antiholomorphic
involution. As noted above, the fiber $\theta^{-1}(y)$ is isomorphic to either 
$J_\X$ or the singular Kummer variety $J_\X /
({\mathbb Z}/2{\mathbb Z})$. The real structure on $\theta^{-1}(y)$ induced by 
that of $\widetilde{M}_L$ comes from the real structure on the 
Jacobian associated to the curve. So in both cases we recover the Jacobian variety 
together with its natural real structure: when $\theta^{-1}(y)$ is isomorphic to $J_\X / 
({\mathbb Z}/2{\mathbb Z})$, then $J_X$ is obtained from the two--sheeted cover of
the smooth locus of the Kummer variety defined by the unique maximal torsion--free
subgroup in its fundamental group. The antiholomorphic involution can be lifted to
this cover, and this lift extends to $J_\X$ because its construction is over $\mathbb R$.

Finally, the class of the canonical principal polarization on $J_\X$ is determined as 
in \cite{BH}. Now the theorem follows from the real analog of 
Torelli theorem \cite[Theorem 9.4]{GH}.
\end{proof}

\section*{Acknowledgements}

We thank the referee for helpful comments. The first author acknowledges support
of a J. C. Bose Fellowship.


\end{document}